\theoremstyle{plain}
\newtheorem{lemma}{Lemma}
\newtheorem{prop}[lemma]{Proposition}
\newtheorem{theorem}[lemma]{Theorem}
\newtheorem*{theorem*}{Theorem}
\newtheorem{definition}[lemma]{Definition}
\numberwithin{lemma}{section}
\numberwithin{equation}{section}
\theoremstyle{remark}
\newtheorem{rmk}[equation]{Remark}
\newcommand{\vareps}{\varepsilon}
\newcommand{\OO}{\mathcal{O}}
\newcommand{\QQ}{\mathbb{Q}}
\newcommand{\ZZ}{\mathbb{Z}}
\newcommand{\p}{\ell}
\newcommand{\eps}{\varepsilon_\p}
\newcommand{\kro}[2]{\left( \frac{#1}{#2}\right )}
\newcommand{\GL}{\mathrm{GL}}
\newcommand{\SL}{\mathrm{SL}}
\newcommand{\ord}{\operatorname{ord}}
\newcommand{\lcm}{\operatorname{lcm}}
\newcommand{\sk}{\mid_{\tfrac k2}}
\newcommand{\wh}{M_{k/2}^{wh}\left(\Gamma_0(N),\chi\right)}
\newcommand{\whps}{M_{k/2}^{wh}\left(\Gamma_0\left(N\p^2\right),\chi\right)}
\newcommand{\whpskappa}{M_{\kappa/2}^{wh}\left(\Gamma_0\left(N\p^2\right),\chi\right)}
\newcommand{\hps}{S_{\kappa/2}\left(\Gamma_0\left(N\p^2\right),\chi\right)}
\newcommand{\whone}{M_{k/2}^{wh}\left(\Gamma_1(N)\right)}
\newcommand{\whoneint}{M_{k}^{wh}\left(\Gamma_1(N)\right)}
\newcommand{\Tqs}{T_{\kappa/2,N\p^2}\left(Q^2\right)}
\def\smat#1{\left(\begin{smallmatrix} #1 \end{smallmatrix} \right)}
\def\pmat#1{\left(\begin{matrix} #1 \end{matrix} \right)}
\begin{document}

\title{Congruences satisfied by eta-quotients
}

\author[N.C. Ryan, Z. Scherr, N. Sirolli, S. Treneer]{Nathan C. Ryan \and Zachary Scherr \and Nicol\'as Sirolli \and Stephanie Treneer
}

\keywords{Partition functions, Congruences, Eta-quotients}
\subjclass[2010]{Primary: 11F33, 11F37}

\maketitle

\begin{abstract}

	The values of the partition function, and more generally the Fou\-rier
	coefficients of many modular forms, are known to satisfy certain congruences.
	Results given by Ahlgren and Ono for the partition function and by Treneer for
	more general Fourier coefficients state the existence of infinitely many families
	of congruences.
	In this article we give an algorithm for computing explicit instances of such
	congruences for eta-quotients.
	We illustrate our method with a few examples.

\end{abstract}

\section{Introduction}\label{sec:intro}

The partition function $p(n)$ gives the number of non-increasing sequences of positive integers that sum to $n$. Ramanujan first discovered that $p(n)$ satisfies
\begin{align*}
p(5n+4)&\equiv 0\pmod{5},\\
p(7n+5)&\equiv 0\pmod{7}, \;\mathrm{and}\\
p(11n+6)&\equiv 0\pmod{11},
\end{align*}
for all positive integers $n$. His work inspired broad interest in other linear congruences for the partition function, and further results were found modulo powers of small primes, especially 5, 7 and 11 (see \cite{ahlgrenono} for references). Ono \cite{ono} showed more generally that infinitely many congruences of the form $p(An+B)\equiv 0\pmod{\ell}$ exist for each prime $\ell\geq 5$. This work was extended by Ahlgren \cite{ahlgren} to composite moduli $m$ coprime to $6$. While these works did not provide direct computation of such congruences, many explicit examples were given by Weaver \cite{weaver} and Johansson \cite{johansson} for prime moduli $\ell$ with $13\leq\ell\leq 31$, and by Yang \cite{yang} for $\ell=37$. All of these results give congruences for arithmetic progressions which lie within the class $-\delta_\ell$ modulo $\ell$, where $\delta_\ell=\frac{\ell^2-1}{24}$.

Here we will focus on congruences like those of the form guaranteed by Ahlgren and Ono in \cite{ahlgrenono}, which lie outside the class $-\delta_\ell$ modulo $\ell$.
More precisely, we consider the following result as stated in \cite{treneer1}.

\begin{theorem}\label{thm:partition}
Let $\p \geq 5$ be a prime.  Then for a positive proportion of the primes $Q\equiv -1 \pmod{576\p^j}$, we have
\[
p\left (\frac{Q^3n+1}{24}\right )\equiv 0 \pmod{\p^j}
\]
for all $n$ coprime to $\p Q$ such that $n\equiv 1 \pmod{24}$ and $\kro{-n}{\p}
\neq \kro{-1}{\p}$.
\end{theorem}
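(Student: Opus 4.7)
My plan is to realize the values $p((N+1)/24)$ as Fourier coefficients of a weight $-1/2$ weakly holomorphic modular form, sieve out the desired arithmetic progression using quadratic twists, pass to a holomorphic cusp form that preserves these coefficients modulo $\p^j$, and finally invoke a Serre-type density theorem on the vanishing of Hecke eigenvalues modulo prime powers. Concretely, I would start from $f(z) := \eta(24z)^{-1} = \sum_{n\ge 0} p(n)\,q^{24n-1}$, a weakly holomorphic form of weight $-1/2$ on $\Gamma_0(576)$ with quadratic character, whose $q^N$ coefficient is $p((N+1)/24)$ when $N \equiv -1 \pmod{24}$ and zero otherwise.

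For any prime $Q \equiv -1 \pmod{576\p^j}$ one has $Q^3 \equiv -1 \pmod{24}$, so $Q^3 n \equiv -1 \pmod{24}$ whenever $n \equiv 1 \pmod{24}$; and since $Q \equiv -1 \pmod{\p}$, the condition $\kro{-n}{\p} \neq \kro{-1}{\p}$ is equivalent to $\kro{-Q^3n}{\p} = -1$, a single Legendre-symbol condition on $N = Q^3 n$. I therefore take an appropriate linear combination of $f$ and its quadratic twist by $\kro{\cdot}{\p}$ to produce a weakly holomorphic form $g$ of weight $-1/2$ on $\Gamma_0(576\p^2)$ whose Fourier expansion at infinity is supported on those indices $N$ (coprime to $\p$, with the correct Legendre symbol, and lying in the right class modulo $24$) and agrees with that of $f$ there.

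Following Treneer's approach, I next multiply $g$ by a carefully chosen cusp form (for instance, a power of $\eta(\p z)$) to obtain a holomorphic cusp form $G$ of some positive half-integral weight $\kappa/2$ on $\Gamma_0(576\p^2)$ whose Fourier expansion matches that of $g$ modulo $\p^j$ on indices coprime to $\p$. Applied to $G$, the density theorem for half-integral weight forms (proved via the Shimura correspondence and Serre's theorem on mod-$\p^j$ Galois representations) supplies a positive proportion of primes $Q \equiv -1 \pmod{576\p^j}$ for which $G \mid T_{Q^2} \equiv 0 \pmod{\p^j}$. Reading off the standard formula for the $(Qn)$th Fourier coefficient of $G \mid T_{Q^2}$ when $\gcd(n,\p Q) = 1$, both the Legendre-symbol term (because $\kro{Qn}{Q}=0$) and the $a(n/Q)$ term (because $Q^2\nmid Qn$) vanish, leaving $a(Q^3 n)\equiv 0 \pmod{\p^j}$, which translates to $p\bigl((Q^3n+1)/24\bigr)\equiv 0 \pmod{\p^j}$.

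The principal obstacle is the construction of the holomorphic cusp form $G$ of positive weight which preserves, modulo $\p^j$, the Fourier coefficients of $g$ at indices coprime to $\p$. This requires a careful analysis of the orders of vanishing of $g$ at every cusp of $\Gamma_0(576\p^2)$ together with the construction of a cuspidal multiplier whose expansion vanishes sufficiently at each such cusp modulo $\p^j$ while contributing trivially modulo $\p^j$ to the Fourier coefficients at infinity indexed by integers coprime to $\p$. Once this holomorphic realization is achieved, the density theorem and the Hecke calculation above close the argument.
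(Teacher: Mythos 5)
Your proposal follows essentially the same route as the paper's source for this statement (Treneer's proof of Theorem~\ref{thm:thm12}, whose machinery the paper reproduces in proving Theorem~\ref{thm:main}): twist $1/\eta(24z)$ to isolate the indices with $\kro{n}{\p}=-\eps$, multiply by a cuspidal eta-quotient that is $\equiv 1 \pmod{\p^j}$ to land in a space of holomorphic cusp forms of positive half-integral weight, and apply the Shimura--Serre density argument to $T_{Q^2}$, reading off $a(Q^3n)$ from the $(Qn)$th coefficient. The only slip is your parenthetical choice of multiplier: a power of $\eta(\p z)$ is not congruent to $1$ modulo $\p^j$ and would destroy the Fourier coefficients, whereas the standard choice $F_\p^{\p^\beta}$ with $F_\p = \eta(z)^{\p^2}/\eta(\p^2 z)$ (vanishing at the relevant cusps and satisfying $F_\p^{\p^{j}-1}\equiv 1 \pmod{\p^j}$) has exactly the property you yourself stipulate.
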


Let $\eta(z)$ denote the Dedekind eta function, which is given by
\[
	\eta(z) = q^{\tfrac1{24}}\prod_{n=1}^\infty \left(1-q^n\right),
	\qquad q=e^{2\pi iz}.
\]
The proof of Theorem~\ref{thm:partition} relies on the relationship between
$p(n)$ and $\eta(z)$, given by
\[
	\sum_{n=-1\pmod{24}}p\left (\frac{n+1}{24}\right)q^n =
	\frac{1}{\eta(24z)}.
\]
The function on the right is an example of an \emph{eta-quotient}, a particular
class of weakly holomorphic modular forms built from $\eta(z)$ (see
Definition~\ref{def:etaquotient} below for details).
This class includes generating functions for a wide variety of partition
functions, including overpartitions, partitions into distinct parts, colored
partitions, and core partitions.

Methods from the theory of modular forms, Hecke operators, and twists by
quadratic characters, are used to derive congruences for the coefficients of
these modular forms. 
Moreover, the fourth author \cite{treneer1} showed that congruences such as
those in Theorem \ref{thm:partition} are common to the coefficients of all
weakly holomorphic modular forms in a wide class.

\begin{theorem}[Theorem 1.2 in \cite{treneer1}]\label{thm:thm12}
Suppose that $\p$ is an odd prime and that $k$ is an odd integer.  Let $N$ be a
positive integer with $4\mid N$ and $(N,\p)=1$.  Let $K$ be a number field with
ring of integers $\OO_K$, and suppose that $f=\sum_n a(n)q^n\in
\whone \cap \OO_K((q))$.  
Let $f$ satisfy Condition C at $\p$.  Then for each positive integer $j$, a
positive proportion of the primes $Q\equiv -1\pmod{N\p^j}$ satisfy the congruence 
\[
a\left(Q^3n\right)\equiv 0\pmod{\p^j}
\]
for all $n \geq 1$ coprime to $\p Q$ such that $\kro{-n}{\p} \neq \vareps_\p$.
\end{theorem}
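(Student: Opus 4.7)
The plan is to reduce the theorem to a Serre-style density statement for the Hecke operator $\Tqs$ acting on a half-integral weight cusp form $g$ whose Fourier expansion modulo $\p^j$ will reproduce the coefficients of $f$ at indices lying in the prescribed arithmetic progressions. Once such a $g$ has been produced, I would apply the density result to obtain infinitely many primes $Q$ in the required congruence class that annihilate $g$ under $\Tqs$ modulo $\p^j$, and unpacking this annihilation at specific Fourier indices will give the claimed congruence on $a(Q^3n)$.

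\textbf{Step 1 (construction of $g$).} First I would exploit Condition C at $\p$: this hypothesis is designed so that applying a sufficiently high power of $U_{\p^2}$, and if necessary multiplying by an auxiliary theta- or eta-factor to land in a space of the form $\whpskappa$, will kill the principal parts of $f$ at every cusp modulo $\p^j$. I would then twist by the quadratic character $\psi=\kro{\cdot}{\p}$ and take the appropriate combination $\tfrac{1}{2}(h\pm h\otimes\psi)$ to isolate a support $S\subset (\ZZ/\p\ZZ)^\ast$ chosen precisely so that $Q^3n$ lies in $S$ whenever $Q\equiv -1\pmod{N\p^j}$ and $\kro{-n}{\p}\neq\eps$. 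A vanishing check at the remaining cusps should then yield $g\in\hps$ with
\[
g \equiv \sum_{\substack{(m,\p)=1 \\ \kro{m}{\p}\in S}} a(m)\,q^m \pmod{\p^j}.
\]

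\textbf{Step 2 (density of Hecke-killing primes).} The second ingredient is the half-integral weight extension of Serre's theorem on mod-$\p^j$ Hecke eigenvalues: for any $g\in\hps\cap\OO_K[[q]]$ and any $j\ge 1$, a positive proportion of primes $Q$ in any fixed progression modulo $N\p^j$ — in particular $Q\equiv -1\pmod{N\p^j}$ — satisfy $g\mid\Tqs\equiv 0\pmod{\p^j}$. I would invoke this by applying Chebotarev to the residual Galois representation attached to $g$ via the Shimura lift, with the congruence on $Q$ pinning down a Frobenius conjugacy class.

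\textbf{Step 3 (reading off the congruence).} For such a $Q$ and any $n$ with $(n,\p Q)=1$ and $\kro{-n}{\p}\neq\eps$, I would evaluate the identity $g\mid\Tqs\equiv 0\pmod{\p^j}$ at the Fourier index $m=Qn$. Writing $g=\sum b(m)q^m$, the standard formula
\[
(g\mid\Tqs)(m) = b(Q^2 m) + \chi^\ast(Q)Q^{(\kappa-3)/2}\kro{(-1)^{(\kappa-1)/2}m}{Q}b(m) + \chi(Q)^2 Q^{\kappa-2}b(m/Q^2)
\]
has its middle Kronecker factor vanish since $Q\mid m$, and its third term vanish since $Q\nmid n$, leaving $b(Q^3 n)\equiv 0\pmod{\p^j}$. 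The support condition built into Step 1 ensures $Q^3n$ lies in the support of $g$, so $b(Q^3 n)\equiv a(Q^3 n)\pmod{\p^j}$ and the theorem follows.

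\textbf{Main obstacle.} By far the hardest step is the first one: assembling Condition C, a power of $U_{\p^2}$, a weight-raising factor, and a quadratic twist into a single \emph{cusp} form $g$ of half-integral weight on $\Gamma_0(N\p^2)$ with a controlled character and the required Fourier support modulo $\p^j$. Each ingredient is standard in isolation, but tracking how they interact — especially preserving cuspidality after all the manipulations, and matching the residue class $S$ precisely to the arithmetic progression in the theorem — is the heart of the proof. Once $g$ is in hand, Steps 2 and 3 reduce to a Chebotarev application plus a one-line expansion of $\Tqs$.
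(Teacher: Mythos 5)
Your outline reproduces the architecture of Treneer's proof of this theorem (which the paper cites rather than reproves, but whose key construction reappears in the proof of Theorem~\ref{thm:main}): the twist combination $f\otimes\chi_\p^{\mathrm{triv}}-\eps\, f\otimes\kro{\cdot}{\p}$ isolates exactly your support $S$, Condition C forces vanishing at the cusps with $\p^2\mid c$, the auxiliary factor is the explicit eta-quotient $F_\p^{\p^\beta}\equiv 1\pmod{\p^j}$ clearing the remaining poles, and the Serre--Shimura--Chebotarev density step followed by evaluating $T(Q^2)$ at the index $m=Qn$ are exactly as you describe. The only slips are cosmetic: under Condition C no power of $U_{\p^2}$ is needed, and the positive-density statement for $g\mid T(Q^2)\equiv 0\pmod{\p^j}$ is tied to the specific progression $Q\equiv-1\pmod{N\p^j}$ (it is not available for an arbitrary progression), which is the one you in fact use.
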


For a description of Condition C and the numbers $\vareps_\p$, see
Definition~\ref{def:conditionC}.
A similar result holds for integral weight modular forms.

\begin{theorem}[Theorem 1.3 in \cite{treneer1}]\label{thm:thm13}
Suppose that $\p$ is an odd prime and that $k$ is an integer.  Let $N$ be a
positive integer with $(N,\p)=1$. 
Let $K$ be a number field with ring of integers $\OO_K$, and suppose
that $f=\sum_n a(n)q^n\in \whoneint \cap \OO_K((q))$.  
Let $f$ satisfy Condition C at $\p$. Then for each positive integer $j$,
	a positive proportion of the primes $Q\equiv -1 \pmod{N\p^j}$ have the property that 
\[
a(Qn)\equiv 0\pmod{\p^j}
\]
for all $n \geq 1$ coprime to $\p Q$ such that $\kro{-n}{\p} \neq \vareps_\p$.
\end{theorem}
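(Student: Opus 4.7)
The plan is to follow the strategy used for the companion half-integral weight Theorem~\ref{thm:thm12} in \cite{treneer1}, which here becomes somewhat simpler because Hecke operators on integer weight modular forms act at primes rather than prime squares; this is already reflected in the conclusion involving $Qn$ rather than the $Q^3 n$ of Theorem~\ref{thm:partition}. The argument splits into four stages: isolate the arithmetic progression defined by $\kro{-n}{\p}\neq \eps$ by a quadratic twist of $f$; iterate the Atkin operator $U_\p$ and invoke Condition~C to turn the result into a form that is $\p^j$-adically holomorphic at every cusp; multiply by an auxiliary form that is $\equiv 1 \pmod{\p^j}$ to land in a space of honest cusp forms of some even weight $k'$; and finally apply a Chebotarev-style density theorem to exhibit the promised primes $Q$.

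Concretely, the combination of $U_\p$, $V_\p$, and twisting by the Kronecker symbol $\kro{-\cdot}{\p}$ produces a series whose coefficients indexed by $n$ coprime to $\p Q$ agree with those of $f$ precisely on the progression $\kro{-n}{\p}\neq \eps$, and this series is still a weakly holomorphic modular form on some $\Gamma_1(N\p^s)$. The main technical obstacle then arises: showing that, after further iterations of $U_\p$, this form becomes $\p^j$-adically holomorphic at every cusp, not just at infinity. This is precisely the statement that Condition~C is designed to supply; it bounds the orders of $f$ at all cusps in a way that lets high powers of $U_\p$ absorb the polar parts modulo $\p^j$. Multiplying by a high power of a weight-raising form whose $q$-expansion reduces to $1$ modulo $\p^j$ (for instance a suitable power of the normalized Eisenstein series $E_{\p-1}$) then produces an honest cusp form $g\in S_{k'}(\Gamma_1(M))\cap\OO_K((q))$ on some level $M$ divisible by $N\p^s$, with $g$ congruent modulo $\p^j$ to the twisted and $U_\p$-iterated series, and with $k'$ chosen to be an even integer large enough that $Q^{k'-1}\equiv -1 \pmod{\p^j}$ for every prime $Q\equiv -1 \pmod{N\p^j}$.

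With $g$ in hand, a Chebotarev-type density argument applied to the mod $\p^j$ Galois representations attached to the Hecke eigensystems appearing in the reduction of $S_{k'}(\Gamma_1(M))$ modulo $\p^j$, intersected with the cyclotomic condition coming from $Q\equiv -1 \pmod{M\p^j}$, yields a positive proportion of primes $Q$ in that progression for which $T_Q g\equiv 0 \pmod{\p^j}$. For any such $Q$ and any $n\geq 1$ coprime to $\p Q$, the coefficient of $q^{Qn}$ in $T_Q g$ equals that of $g$, since the cross-term $Q^{k'-1}\cdot(\text{coefficient of } q^{n/Q})$ is absent; hence this coefficient vanishes modulo $\p^j$. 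Unwinding the weight-raising multiplication, the iterated $U_\p$, and the twist then returns this vanishing statement to $f$ itself, yielding $a(Qn)\equiv 0 \pmod{\p^j}$ exactly under the stated hypothesis $\kro{-n}{\p}\neq \eps$.
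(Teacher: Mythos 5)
The paper does not prove this statement itself --- it is quoted from \cite{treneer1} --- but the construction underlying Treneer's proof is reproduced in detail in the proof of Theorem~\ref{thm:main} and its integral-weight analogue in Section~\ref{sec:results}, and your outline matches that architecture at the level of headings: twist to isolate the progression $\kro{-n}{\p}\neq\eps$, use Condition~C to control the behaviour at the cusps, multiply by an auxiliary form congruent to $1$ modulo powers of $\p$ to land in a space of honest cusp forms, then obtain a positive proportion of primes $Q$ with $T_Qg\equiv 0\pmod{\p^j}$ by a Serre/Chebotarev density argument and read off $a(Qn)\equiv 0$ from the Hecke recursion (for $n$ coprime to $Q$ the cross term involving $a(n/Q)$ is indeed absent). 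The first and last stages are essentially correct.

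The gap is in the middle stage, where the poles at the cusps must be disposed of. Two families of cusps of $\Gamma_0(N\p^2)$ are handled by two distinct mechanisms, and you have conflated them. The twist $f_\p=f\otimes\chi_\p^{triv}-\eps\,f\otimes\kro{\cdot}{\p}$ together with Condition~C kills the principal part only at the cusps $a/c$ with $\p^2\mid c$ (this is \cite[Proposition 3.4]{treneer1}); at the cusps with $\p^2\nmid c$ the twisted form in general still has poles, of order bounded below by the $v_0$ of Lemma~\ref{lem:vanishing_twist}. Those remaining poles are \emph{not} absorbed by ``further iterations of $U_\p$'' --- indeed they cannot be in this theorem, since inserting $U_{\p^m}$ would turn the conclusion into a statement about $a(Q\p^m n)$ rather than $a(Qn)$; avoiding the $U_{\p^m}$ step is exactly what Condition~C buys (it is the difference between Theorem~1.1 and Theorems~1.2/1.3 of \cite{treneer1}). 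The remaining poles must instead be absorbed by the weight-raising form itself, which therefore has to vanish to high order at every cusp $a/c$ with $\p^2\nmid c$. Your candidate $E_{\p-1}$ is nonvanishing at every cusp, so no power of it can convert a form with poles into a cusp form: the congruence $E_{\p-1}\equiv 1\pmod{\p}$ raises the weight but does nothing to the polar parts. The correct choice is the eta-quotient $F_\p=\eta(z)^{\p^2}/\eta(\p^2z)$ for $\p\geq 5$ (resp.\ $\eta(z)^{27}/\eta(9z)^3$ for $\p=3$), which is congruent to $1$ modulo $\p$ and whose orders of vanishing at precisely those cusps are recorded in \eqref{eqn:vanishing_Fp}; one raises it to a power $\p^\beta$ with $\p^\beta\ord_s(F_\p)>-v_0$ at all such cusps. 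With that substitution, and with the cuspidality argument reassigned accordingly, your outline goes through as in \cite{treneer1} and in Section~\ref{sec:results}.
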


We call a prime $Q$ as in Theorem~\ref{thm:thm12} or Theorem~\ref{thm:thm13} an
\emph{interesting} prime (with respect to $f, \p$ and $j$). 
While these results give the existence of infinitely many interesting primes,
they are not explicitly given, and in general they have not been simply
described. 
A notable exception is the work of Atkin \cite{atkin}, in which it is shown that
every prime $Q\equiv -1\pmod{\ell}$ yields congruences as in Theorem
\ref{thm:partition} when $\ell\in\{5,7,13\}$ and $j=1$.
The main goal of this paper is to present results and algorithms that allow us
to find interesting primes $Q$, and hence prove explicit congruences for the
coefficients of eta-quotients. While we only explicitly state congruences for
select eta-quotients, we remark that our algorithms are general and, with enough
computing power, would work for any eta-quotient.

\medskip

We start by presenting some background on weakly holomorphic modular forms in
general and eta-quotients in particular.  We proceed from there to discuss
various computations we carry out with eta-quotients:  we recall how to compute
coefficients of a general eta-quotient and we discuss how to compute the
expansions of a general eta-quotients at cusps other than $\infty$. 
In the subsequent section, we prove the theorems related to the computation of
interesting primes.
We conclude by describing the algorithms given by these theorems and presenting
some examples.
In particular, we give new explicit congruences for the partition function.

\section{Background and preliminaries}

Given a positive integer $N$ divisible by $4$ and an integer $k$, we let 
$\whone$ denote the space of weakly holomorphic modular forms for
$\Gamma_1(N)$ of weight $k/2$. 
We do not make hypotheses on the parity of $k$, in order to handle both
half-integral and integral weight cases at the same time.
Given a Dirichlet character $\chi$ modulo $N$ we denote by $\wh$ the
corresponding subspace.
We
denote by $G$ the four-fold cover of $\GL_2^+(\QQ)$ acting on these spaces by the
slash operator $\sk$. We refer the reader to \cite{koblitz} for details.

We consider the Fourier expansion of a weakly holomorphic modular form $f \in
\wh$ at a cusp $s=a/c$ in $\QQ\cup \{\infty\}$. 
To such an $s$ we associate an element $\xi_s = (\gamma_s,\phi) \in G$ with
$\gamma_s \in \SL_2(\ZZ)$ such that $\gamma_s \cdot \infty=s$.
Then in \cite[p. 182]{koblitz} it is shown that
\begin{equation}\label{eq:FE}
	f(z) \sk \xi_s 
	= \sum_{n \geq n_0} a_{\xi_s}(n) q_{h_s}^{n+\tfrac{r_s}{4}},
\end{equation}
where $h_s = \tfrac{N}{\gcd(c^2,N)}$ is the width of $s$ as a cusp of
$\Gamma_0(N)$, $r_s \in \{0,1,2,3\}$ depends on $h_s, k$ and $\chi$ (see
\cite[Proposition 2.8]{treneer3} for an explicit formula) and $
q_{h_s}=\exp(2\pi i z/h_s)$.
While the value of the coefficient $a_{\xi_s}(n)$ in the expansion depends on
$\xi_s$, whether or not the coefficient is zero does not depend on $\xi_s$.

The theorems in \cite{treneer1} about congruences mod $\p^j$ are subject to
whether the modular form we are considering satisfies the following
condition for $\p$:
\begin{definition}\label{def:conditionC}
We say that $f$ satisfies \emph{Condition C} for a prime $\p$ if there exists a
sign $\eps = \pm 1$ such that for each cusp $s$, the following is true for
the Fourier expansion in \eqref{eq:FE}: for all $n<0$, with $\p\nmid (4n+r_s)$
and $a_{\xi_s}(n)\neq 0$, we have
\[
\kro{4n+r_s}{\p}= \eps\kro{h_s}{\p}.
\]
\end{definition}
We note that a modular form $f$ can satisfy Condition C for a prime $\p$ in a
trivial way:  namely, if for each cusp $s$ there is no $n<0$ with
$a_{\xi_s}(n)\neq 0$ and $\p\nmid (4n+r_s)$.  
In that case, we set $\eps = 0$.
We also note that when $s=\infty$ we have $r_s=0$ and $h_s=1$, and so Condition
C becomes $\kro{n}{\p}=\eps$ for all $n<0$ with $\p\nmid n$ and $a(n)\neq 0$. 

\medskip

We conclude these preliminaries with the following result regarding the order of
vanishing after twisting, which will be needed later.

\begin{lemma}\label{lem:vanishing_twist}
	Let $f \in \wh$. 
	Let $\p$ be a prime not dividing $N$ and let $\psi$ be a character modulo $\p$.
	Let $s$ be a cusp with respect to $\Gamma_0(N \p^2)$.
	Then 
	\[
		\ord_s(f\otimes \psi) \geq v_0,
	\]
	where $v_0 = \min\{\ord_t(f): t \text{ a cusp with respect to } \Gamma_0(N)\}$.
\end{lemma}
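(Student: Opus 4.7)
My plan is to reduce the bound to the hypothesized orders at cusps of $\Gamma_0(N)$ by means of the classical twisting formula. Up to a Gauss-sum normalization one has
\[
	f \otimes \psi = \sum_{v \bmod \p} \bar\psi(v)\, f \sk \alpha_v,
\]
where $\alpha_v \in \GL_2^+(\QQ)$ is the matrix corresponding to the translation $z \mapsto z + v/\p$ (with a simpler, sieving expression in the trivial-character case). Since the order at any cusp of a linear combination is at least the minimum of the orders of the summands, it suffices to prove $\ord_s(f \sk \alpha_v) \geq v_0$ for each $v$.

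To analyze $\ord_s(f \sk \alpha_v)$, I would fix a representative $\xi_s = (\gamma_s, \phi) \in G$ with $\gamma_s \in \SL_2(\ZZ)$ and examine the product $\alpha_v \gamma_s \in \GL_2^+(\QQ)$. Its image of $\infty$ is a rational number, hence $\Gamma_0(N)$-equivalent to some cusp $t_v$ of $\Gamma_0(N)$, so we obtain a factorization
\[
	\alpha_v \gamma_s = \gamma \cdot \gamma_{t_v} \cdot \tau_v,
\]
with $\gamma \in \Gamma_0(N)$, $\gamma_{t_v} \in \SL_2(\ZZ)$ a representative for $t_v$, and $\tau_v$ upper triangular with positive determinant (a product of $\p$'s). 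Lifting this identity to the four-fold cover and applying $\sk$ to $f$, one identifies $f \sk (\alpha_v \xi_s)$ with a unit scalar times $(f \sk \xi_{t_v}) \sk \tau_v$, whose Fourier expansion is obtained from that of $f$ at $t_v$ by rescaling the local coordinate and multiplying by a root of unity. Such an operation cannot lower the order of vanishing, so $\ord_s(f \sk \alpha_v) \geq \ord_{t_v}(f) \geq v_0$.

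The main obstacle I anticipate is the bookkeeping needed to (i) lift the matrix factorization to the four-fold cover $G$ so as to absorb the multiplier system and the character $\chi$ into a harmless unit scalar, and (ii) verify that passing from the local parameter at $t_v$ (with width $h_{t_v}^{(N)}$ and shift $r_{t_v}$) to that at $s$ (with width $h_s^{(N\p^2)}$ and shift $r_s$) preserves the lower bound rather than scaling it down. The coprimality hypothesis $(\p, N) = 1$ is what keeps these widths commensurable and the multiplier-system computations tractable.
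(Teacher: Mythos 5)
Your proposal follows essentially the same route as the paper's proof: expand $f\otimes\psi$ as a linear combination of $f\sk\alpha_\nu$ over translations by $\nu/\p$, factor $\alpha_\nu\gamma_s$ through an element of $\SL_2(\ZZ)$ times an upper-triangular matrix to land at a cusp $t$ of $\Gamma_0(N)$, and observe that the residual upper-triangular action only rescales the local parameter and introduces roots of unity. The bookkeeping you flag as the main obstacle, namely item (ii), is exactly what the paper's computation settles, by showing that the diagonal entries satisfy $a\in\{1,\p\}$ and hence $h_t=\tfrac{a}{d}h_s$ since $\p\nmid N$, so the exponent $n+\tfrac{r_t}{4}$ carries over unchanged from the $q_{h_t}$-expansion at $t$ to the $q_{h_s}$-expansion at $s$.
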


\begin{proof}
	We follow \cite[Lemma 2, p. 127]{koblitz}.
	In \cite[p. 128]{koblitz} it is shown that there exist complex numbers
	$\lambda_\nu$ depending on $\psi$ such that
	\[
		f \otimes \psi = \sum_{\nu = 0}^{\p-1}
		\lambda_\nu \, f_\nu,
	\]
	where $\xi_\nu = \left(\smat{1 & -\nu/\p \\ 0 & 1},1\right)$ and
	$f_\nu = f \sk \xi_\nu$.

	Let $\xi_s =  (\gamma_s,\phi)\in G$, where $\gamma_s \in \SL_2(\ZZ)$ is such
	that $\gamma_s \cdot \infty = s$. 
	Write
	\[
		\pmat{\p & -\nu \\ 0 & \p} \gamma_s = \alpha \pmat{a & b \\ 0 & d},
	\]
	with $\alpha \in \SL_2(\ZZ)$ and $a,d \in \ZZ$.
	Then if we let $t = \alpha \cdot \infty$ and we consider it as a cusp with
	respect to $\Gamma_0(N)$, it is easy to see that $a\in \{1,\p\}$, which since
	$\p \nmid N$ implies that $h_t = \frac ad \, h_s$.

	Let $\sum_{n \geq v_0} a_t(n) q^{n+\tfrac{r_t}4}_{h_t}$
	be a Fourier expansion of $f$ at $t$.
	Then 
	\[
		f_\nu \sk \xi_s =
		\left(a/\p\right)^{k/2}
		e^{\frac{2\pi i b r_t}{4 h_t d}}
		\sum_{n \geq v_0}
		a_t(n) \, 
		e^{\frac{2\pi i n b}{h_t d}}
		q_{h_s}^{n + \tfrac{r_t}4},
	\]
	which shows that $\ord_s \left(f_\nu\right) \geq v_0$ (and that $r_s =
	r_t$).
\end{proof}
\subsection{Eta-quotients}

Recall that the Dedekind eta function is defined by the infinite product
\begin{equation}\label{eq:eta}
\eta(z) = q^{\tfrac{1}{24}} \prod_{n=1}^\infty \left(1-q^n\right).
\end{equation}

\begin{definition}\label{def:etaquotient}
Let $X = \{(\delta,r_\delta)\}$ be a finite subset of $\ZZ_{>0} \times \ZZ$.
Assume that $X$ satisfies
\[
	\sum_X r_\delta \delta \equiv 0 \pmod{24}.
\]
By $\eta^X$ we denote the eta-quotient
\begin{equation}\label{eq:etaq}
	\eta^X(z) = \prod_X \eta(\delta z)^{r_\delta}.
\end{equation}
\end{definition}

It is proved in \cite[Corollary 2.7]{treneer3} that $\eta^X \in \wh$,
where $k = \sum_X r_\delta$, the level $N$ is the smallest multiple of $4$ and
of every $\delta$ such that
\[
	N \sum_X \frac{r_\delta}{\delta} \equiv 0 \pmod{24},
\]
and letting $s = \prod_X\delta^{r_\delta}$  the character
is given by
\[
	\chi(d) = 
	\begin{cases}
		\kro{2s}{d}, & \text{for odd } k, \\
		\kro{4s}{d}, & \text{for even } k. \\
	\end{cases}
\]

For our calculations, we will need the order of vanishing of an eta-quotient
$\eta^X$ at a cusp $s = a/c \in \QQ\cup\{\infty\}$ with respect to the variable
$q_{h_s}$.
Following \cite{ligozat} we have that, if $\gcd(a,c) = 1$, then
\begin{equation}\label{eqn:ligozat}
	\ord_s\left(\eta^X\right) =
	\frac{h_s}{24} \, \sum_X \gcd(c,\delta)^2 \, \frac{r_\delta}{\delta}.
\end{equation}

\section{Computing eta-quotients}\label{sec:computing}

To verify that a weakly holomorphic modular form $f$ satisfies Condition C, we
need to be able to compute the Fourier expansion of $f$ at cusps other than
$\infty$. 
This is addressed by \cite{collins,dickson} in the case of holomorphic forms. 
Rather than extending these general methods to arbitrary weakly holomorphic
modular forms, we limit our attention to forms that are eta-quotients.

For an eta-quotient we can take advantage of the very simple nature of $\eta(z)$
in two different ways.  First, we can quickly compute coefficients of $\eta(z)$
at $\infty$ using Euler's Pentagonal Number Theorem:  
\begin{equation}\label{eq:pentagonal}
\prod_{n=1}^\infty (1-x^n) 
= 1+ \sum_{k=1}^\infty (-1)^k \left(x^{k(3k+1)/2}+x^{k(3k-1)/2}\right).
\end{equation}
Second, we can take advantage of the transformation properties of $\eta(z)$
under the action of $G$ to compute the expansion of $\eta(z)$ at any
cusp $s$ as in \eqref{eq:FE}.  We describe this in more detail now.

Let $\eta^X$ be an eta-quotient as in \eqref{eq:etaq}. Let $s \in \QQ$ be a
cusp, and let $\xi_s = (\gamma_s,\phi) \in G$ be such that $\gamma_s \cdot
\infty=s$. 
Then

\[
	\eta^X(z) \sk \xi_s
	= \prod_X \left(\eta(\delta z)\mid_{\frac12}\xi_s\right)^{r_\delta}
\]
and so our problem reduces to computing $\eta(\delta z)\mid_{\frac12} \xi$ for a
given $\xi = (\gamma,\phi) \in G$, with $\gamma = \smat{a & b \\ c & d} \in
\SL_2(\ZZ)$.
Since the calculations depend only on $\gamma$, by abuse of notation we slash by
matrices of $\GL_2^+(\QQ)$ rather than by elements of $G$.

We have that
\[
	\eta(z)\mid_{\frac12}  \smat{\delta & 0 \\0& 1} = \delta^{\frac14}\eta(\delta z).
\]
Furthermore, we can write
\[
	\smat{\delta & 0 \\0& 1}\gamma =
	\gamma' \smat{A&B\\0&D}
\]
where $\gamma'\in\SL_2(\ZZ), \, A=\gcd(c,\delta), \, D=\tfrac{\delta}{A}$, and
$B$ is an integer so that $\delta \mid (Ad-Bc)$.  
We have that
\[
	\eta(z)\mid_{\frac12}\gamma' = \vareps_{\gamma'} \, \eta(z),
\]
where $\vareps_{\gamma'}$ is 24\textsuperscript{th} root of unity given 
explicitly in terms of $\gamma'$ in \cite[(74.93)]{rademacher}. Then 
\begin{align*}
	\eta(\delta z)\mid_{\frac12} \gamma
	&= \delta^{-\frac14}\eta(z)\mid_{\frac12} \smat{\delta & 0 \\0& 1}\gamma
	=\delta^{-\frac14}\eta(z)\mid_{\frac12}\gamma'
    \smat{ A & B \\0& D}\\ 
			 &= \delta^{-\frac14}\vareps_{\gamma'}\eta(z)\mid_{\frac12}
	\smat{A & B \\0& D}
	= \delta^{-\frac14}\vareps_{\gamma'}
	\left(\tfrac{A}{D}\right)^{\frac14}\eta\left(\tfrac{Az+B}{D}\right)
	=D^{-\frac12}\vareps_{\gamma'}\eta\left(\tfrac{Az+B}{D}\right).
\end{align*}
Using \eqref{eq:eta} we conclude that
\begin{equation}\label{eq:etadelta}
	\eta(\delta z)\mid_{\frac12}\gamma 
	= D^{-\frac12} \, \vareps_{\gamma'} \,
	e^{2\pi i B/(24D)} \, q^{A/(24D)} \,
	\prod_{n=1}^\infty \left(1-e^{2\pi i n B/D}q^{An/D}\right),
\end{equation}
and we compute the infinite product using \eqref{eq:pentagonal}.


Finally, to compute the Fourier expansion of $\eta^X$ at the cusp $s$ we expand
each eta factor as in \eqref{eq:etadelta} and multiply these expansions
together.  

\section{Results}\label{sec:results}

We first recall the Sturm bound for half-integral weight modular forms.
Given a non-negative integer $M$, denote $\mu_M = [\SL_2(\ZZ) : \Gamma_0(M)]$.

\begin{prop}\label{prop:sturm}
	Let $\kappa$ be a positive integer.
	Let $K$ be a number field with ring of integers $\OO_K$, and suppose
	that $h=\sum_{n \geq 1} c(n) q^n \in 
	S_{\kappa/2}\left(\Gamma_0(M),\chi\right) \cap \OO_K[[q]]$,
	with $\chi$ of order $m$.
	Let $\p$ be a prime in $\OO_K$. 
	Let
	\begin{equation}\label{eqn:sturm}
		n_0 = \left\lfloor\frac{\kappa \mu_M}{24} - \frac{\mu_M-1}{4mM}\right\rfloor
			+1.
	\end{equation}
	If $c(n) \equiv 0 \pmod{\p^j}$ for $1 \leq n \leq n_0$, then $h \equiv 0
	\pmod{\p^j}$.
\end{prop}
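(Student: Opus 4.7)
The plan is to reduce this half-integer-weight Sturm bound to the classical integer-weight version by passing to a suitable power of $h$. Since $\chi$ has order $m$ and the weight $\kappa/2$ becomes integral after doubling, the form $H := h^{2m}$ lies in $S_{m\kappa}(\Gamma_0(M))$ with trivial character, so the classical Sturm bound applies to $H$ directly. Over the residue field $\OO_K/\p$ the $q$-adic valuation behaves multiplicatively, $v_q(\bar H) = 2m \cdot v_q(\bar h)$, and this is the bridge that carries a Sturm-type bound for $H$ over to one for $h$.

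I would carry out the reduction in two stages. For the base case $j=1$, the ring $\OO_K/\p$ is a field, so applying Sturm's bound to $\bar H \in (\OO_K/\p)[[q]]$ yields $v_q(\bar H) \leq \lfloor m\kappa \mu_M / 12 \rfloor$ whenever $\bar H \neq 0$; dividing by $2m$ produces the desired threshold on $v_q(\bar h)$, whose contrapositive is the $j=1$ case. For $j>1$ I would induct on $j$: if the first $n_0$ coefficients of $h$ vanish modulo $\p^j$, then in particular they vanish modulo $\p^{j-1}$, so by the induction hypothesis $h = \pi^{j-1} h'$ for a uniformizer $\pi$ of $\p$ and some $h' \in S_{\kappa/2}(\Gamma_0(M), \chi) \cap \OO_K[[q]]$; dividing by $\pi^{j-1}$, the first $n_0$ coefficients of $h'$ are divisible by $\pi$, so the base case applied to $h'$ gives $h' \equiv 0 \pmod \p$, hence $h \equiv 0 \pmod{\p^j}$.

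The main obstacle is pinning down the exact numerical bound. Using the crude classical threshold $\lfloor m\kappa \mu_M / 12 \rfloor$ for $v_q(\bar H)$ and dividing by $2m$ recovers only the main term $\kappa \mu_M / 24$, whereas the stated formula also contains the refinement $-(\mu_M-1)/(4mM)$ and a trailing $+1$. These sharpenings come from using a refined form of Sturm's integer-weight bound for $S_{m\kappa}(\Gamma_0(M))$ (derived from the Riemann--Roch dimension formula, which is strictly smaller than $m\kappa \mu_M / 12$ and depends on the widths and number of cusps of $\Gamma_0(M)$) together with careful bookkeeping of the floor function when dividing the resulting inequality by $2m$.
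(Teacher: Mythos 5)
Your overall strategy --- raise $h$ to a power that lands in integral weight with trivial character, apply the classical Sturm bound there, transfer back by dividing the $q$-valuation (using that $\OO_K/\p$ is a domain), and handle $j>1$ by induction --- is exactly the paper's proof, and your induction step is a correct fleshing-out of the paper's one-line ``induction on $j$.'' But there is a concrete error in the power you choose: $H=h^{2m}$ does \emph{not} in general lie in $S_{m\kappa}\left(\Gamma_0(M)\right)$ with trivial character. When $\kappa$ is odd the automorphy factor of $h$ is $\chi(d)\kro{c}{d}^{\kappa}\varepsilon_d^{-\kappa}(cz+d)^{\kappa/2}$, and raising to the $2m$-th power kills $\chi$ and $\kro{c}{d}^{\kappa}$ but leaves behind $\varepsilon_d^{-2m\kappa}=\kro{-1}{d}^{m\kappa}$, which is a nontrivial character whenever $m\kappa$ is odd --- precisely the half-integral-weight situation the proposition is designed for. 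The paper instead takes $h^{4m}\in S_{2\kappa m}\left(\Gamma_0(M)\right)$, for which both $\chi^{4m}$ and the theta-multiplier contribution are trivial.

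Using the correct exponent also dissolves what you call the main obstacle: no extra refinement or floor-function bookkeeping is hiding in the constant. The cusp-form version of Sturm's bound cited from Stein has threshold $\frac{k'\mu_M}{12}-\frac{\mu_M-1}{M}$ in weight $k'$; applied to $h^{4m}$ with $k'=2\kappa m$, and divided through by $4m$ via $v_q\bigl(\bar h^{4m}\bigr)=4m\,v_q\bigl(\bar h\bigr)$, it gives exactly $\frac{\kappa\mu_M}{24}-\frac{\mu_M-1}{4mM}$, and the ``$\lfloor\cdot\rfloor+1$'' is just the observation that an integer exceeding $x$ is at least $\lfloor x\rfloor+1$. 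Note that your exponent $2m$ would have produced the denominator $2mM$, i.e.\ a sharper bound than the one stated, which the argument cannot deliver in general because of the character issue above. One further minor point: in the induction step, dividing by $\pi^{j-1}$ should be done in the localization $\OO_{K,\p}$ (or after choosing $\pi\in\p\setminus\p^2$ and arguing locally), since $\p$ need not be principal in $\OO_K$; the congruences modulo powers of $\p$ are unaffected by this change.
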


\begin{proof}
	When $j = 1$ this follows from the standard Sturm bound for cusp forms of
	integral weight and trivial character
	(see \cite[Theorem 9.18]{stein})
	applied to $h^{4m} \in S_{2\kappa m}\left(\Gamma_0(M)\right)$.
	The general case follows by induction on $j$.
\end{proof}

\begin{theorem}\label{thm:main}
	Assume the same hypotheses as in Theorem~\ref{thm:thm12} and that $f$ is an
	eta-quotient. 
	Then for each positive integer $j$ there exist effectively computable
	non-negative integers $\kappa$ and $n_0$ such that, if
	$Q$ is a prime with $Q \equiv -1 \pmod {N\p^j}$ such that 
	\begin{equation}\label{eqn:the_condition}
		a(Q^2 n) +
		\kro{(-1)^{\tfrac{\kappa-1}{2}}n}{Q} Q^{\tfrac{\kappa-3}{2}} a(n) +
		Q^{\kappa-2} a\kro n{Q^2}
		\equiv 0 \pmod {\p^j}
	\end{equation}
	for every $1\leq n \leq n_0$ such that $\p\nmid n$ and $\kro n\p \neq \eps$,
	then $Q$ is an interesting prime.
\end{theorem}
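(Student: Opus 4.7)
The plan is to adapt the proof of Theorem~1.2 of \cite{treneer1}, making effective the step where one shows that a half-integral weight Hecke operator $T(Q^2)$ annihilates an auxiliary cusp form modulo $\p^j$. Condition~\eqref{eqn:the_condition} is essentially the statement that certain Fourier coefficients of the image under $T(Q^2)$ are divisible by $\p^j$, so the main task is to build the right cusp form and then invoke the Sturm bound of Proposition~\ref{prop:sturm}.

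First I would construct $g = \sum_{n\geq 1} b(n)q^n \in \hps \cap \OO_K[[q]]$, for explicit $\kappa$ and $\chi$, with $b(n) \equiv a(n) \pmod{\p^j}$ whenever $\p \nmid n$ and $\kro{n}{\p} \neq \eps$, and $b(n) \equiv 0 \pmod{\p^j}$ otherwise. As in \cite{treneer1}, this is done by twisting $f$ by a quadratic character modulo $\p$ to isolate the desired coefficients and then multiplying by a sufficiently high $\p$-power of a form congruent to $1$ modulo $\p^j$ to absorb the remaining poles at the cusps. Condition~C at $\p$ is what guarantees that, after the twist, every remaining negative Fourier exponent at every cusp is divisible by $\p$, so such a multiplication can turn the twist into an honest cusp form. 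The eta-quotient hypothesis, together with Ligozat's formula~\eqref{eqn:ligozat} and Lemma~\ref{lem:vanishing_twist}, makes the order of vanishing at every cusp explicitly computable, and hence makes the choice of $\kappa$ and $\chi$ effective.

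Next I would apply $T(Q^2)$ to $g$. The formula for the $n$-th Fourier coefficient of $g|T(Q^2)$ is exactly the left-hand side of \eqref{eqn:the_condition} with $a$ replaced by $b$; the hypothesis $Q\equiv -1\pmod{N\p^j}$ absorbs the nebentypus contributions in Shimura's formula. Two observations then reduce the vanishing of $g|T(Q^2)$ modulo $\p^j$ to checking~\eqref{eqn:the_condition} on the set prescribed by the theorem. First, when $\p \mid n$ or $\kro{n}{\p} = \eps$, the formula vanishes modulo $\p^j$ automatically: since $\kro{Q^2n}{\p} = \kro{n/Q^2}{\p} = \kro{n}{\p}$, each of $b(Q^2 n)$, $b(n)$, and $b(n/Q^2)$ vanishes modulo $\p^j$. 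Second, when $\p \nmid n$ and $\kro{n}{\p} \neq \eps$, we have $b \equiv a \pmod{\p^j}$ at all three indices, so \eqref{eqn:the_condition} is precisely the statement that the $n$-th coefficient of $g|T(Q^2)$ is divisible by $\p^j$. Taking $n_0$ to be the Sturm bound of Proposition~\ref{prop:sturm} applied to $g|T(Q^2) \in \hps$ then gives $g|T(Q^2) \equiv 0 \pmod{\p^j}$.

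To conclude, I would evaluate the coefficient formula at indices of the form $Qm$ with $\gcd(m,Q)=1$: the middle term vanishes because $Q\mid Qm$ and the last because $Q^2\nmid Qm$, leaving $b(Q^3m)\equiv 0\pmod{\p^j}$. For $m$ coprime to $\p Q$ with $\kro{-m}{\p}\neq\eps$, the identity $\kro{Q^3m}{\p}=\kro{-m}{\p}$, which follows from $Q\equiv -1\pmod{\p}$, shows that $Q^3m$ lies in the set where $b\equiv a\pmod{\p^j}$, so $a(Q^3m)\equiv 0\pmod{\p^j}$ as required. The hard part is the first step: one must control the level and character of $g$ carefully, ideally keeping the level at $N\p^2$ so that Proposition~\ref{prop:sturm} applies directly, and track $\kappa$ precisely so that $n_0$ is effectively computable via~\eqref{eqn:sturm}.
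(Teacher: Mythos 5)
Your proposal follows essentially the same route as the paper's proof: twist $f$ by characters modulo $\p$ to isolate the coefficients with $\p\nmid n$ and $\kro n\p\neq\eps$, multiply by a high $\p$-power of an eta-quotient congruent to $1$ modulo $\p^j$ (the paper's $F_\p^{\p^\beta}$) to obtain a cusp form $g_{\p,j}$ of explicit weight $\kappa$ and level $N\p^2$, apply $T(Q^2)$, and invoke the Sturm bound of Proposition~\ref{prop:sturm}. The only difference is that you also spell out the final deduction of the congruence $a(Q^3m)\equiv 0\pmod{\p^j}$ from $g_{\p,j}\vert T(Q^2)\equiv 0$, which the paper delegates to the proof of Theorem~1.2 in \cite{treneer1}; this is correct and consistent with that reference.
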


\begin{proof}
	Let $F_\p$ denote the eta-quotient given by
	\[
		F_\p(z) = \begin{cases}
			\tfrac{\eta^{27}(z)}{\eta^3(9z)}   & \text{if } \p = 3, \\
			\tfrac{\eta^{\p^2}(z)}{\eta(\p^2 z)} & \text{if } \p \geq 5.
		\end{cases}
	\]
	We have that $F_\p \in M_{k_\p/2}(\Gamma_0(\p^2))$, where
	\begin{equation}\label{eqn:kp}
		k_\p = \begin{cases}
			24, & \p = 3, \\
			\p^2-1, & \p \geq 5.
			\end{cases}
	\end{equation}
	Furthermore, it satisfies
	\begin{equation}\label{eqn:congruence_Fp}
		{F_\p}^{\p^j-1} \equiv 1 \pmod{\p^j}, \quad j \geq 1.
	\end{equation}

	Consider $F_\p$ as a form with level $\p^2 N$.
	Then if $s = a/c$ is a cusp with $c \mid \p^2 N$ and $\p^2 \nmid c$, by
	\eqref{eqn:ligozat} the order of vanishing of $F_\p$ at $s$ is given by
	\begin{equation}\label{eqn:vanishing_Fp}
	    \ord_s(F_\p) = 
		\frac{N}{\gcd(c^2,N)} \cdot
		\begin{cases}
            10   & \text{if } \p = 3 \text{ and } \p \nmid c, \\
            1   & \text{if }  \p = 3 \text{ and } \p \mid c, \\
			\tfrac{\p^4-1}{24} & \text{if } \p \geq 5 \text{ and } \p\nmid c, \\
			\tfrac{\p^2-1}{24} & \text{if } \p \geq 5 \text{ and } \p\mid c.
		\end{cases}
	\end{equation}

	Let $\chi$ denote the character of $f$.
	We consider
	\[
		f_\p = f \otimes \chi_\p^{triv} - \eps f \otimes \kro{\cdot}\p
		\quad \in \whps,
	\]
	where $\eps$ is given by Condition C, and $\chi_\p^{triv}$
	denotes the trivial character modulo $\p$.
	Then $f_\p$ vanishes at every cusp $s = a/c$ 
	with $c \mid \p^2N$ and $\p^2 \mid c$
	(see \cite[Proposition 3.4]{treneer1}).

	Let $v_0$ be the integer given by $f$ following Lemma
	\ref{lem:vanishing_twist}.
	This number can be computed using \eqref{eqn:ligozat}, since $f$ is an
	eta-quotient.
	Using \eqref{eqn:vanishing_Fp}, we compute a (small as possible) integer
	$\beta$ such that $\beta \geq j-1$ and
	\[
		\p^\beta \ord_s(F_\p) > -v_0
	\]
	for every cusp $s = a/c$ such that $c \mid \p^2 N$ and
	$\p^2 \nmid c$.
	Furthermore, we let $\kappa = k + \p^\beta k_\p$, and we assume that $\beta$ is
	such that $\kappa > 0$.
	We consider
	\[
		g_{\p,j} = \tfrac12 \, f_\p \cdot  {F_\p}^{\p^\beta}
		\quad \in \whpskappa.
	\]
	Then $g_{\p,j}$ vanishes at every cusp.
	Furthermore, by \eqref{eqn:congruence_Fp} it satisfies that
	\begin{equation}\label{eqn:congruence_hp}
		g_{\p,j} \equiv \sum_{\p \nmid n,\,\kro n\p \neq \eps} a(n) q^n \pmod{\p^j}.
	\end{equation}
	
	Let $Q$ be a prime such that $Q \equiv -1 \pmod{N \p^j}$, and let
	\[
		h_{\p,j} = g_{\p,j} \vert \Tqs
	    \quad \in \hps.
	\]
	Write $h_{\p,j} = \sum_{n=1}^\infty c(n) q^n$.
	The formulas for the action of $\Tqs$ in terms of
	Fourier coefficients imply that if $h_{\p,j} \equiv 0 \pmod{\p^j}$, then $Q$
	is an interesting prime (see the the proof of \cite[Theorem 1.2]{treneer1}
	for details).

	Moreover, by these formulas	and \eqref{eqn:congruence_hp} we have that $c(n)
	\equiv 0 \pmod{\p^j}$ if $\p \mid n$ or if $\kro n\p = \eps$, and otherwise
	$c(n)$ equals the left hand side of \eqref{eqn:the_condition}, modulo
	$\p^j$.
	Hence the result follows from Proposition~\ref{prop:sturm}, taking
	$n_0$ as in \eqref{eqn:sturm} with $M = N\p^2$.
\end{proof}

We now consider the integral weight case.

\begin{theorem}
	Assume the same hypotheses as in Theorem~\ref{thm:thm13} and that $f$ is an
	eta-quotient.  
	Then for each positive integer $j$ there exist effectively computable
	non-negative integers $\kappa$ and $n_0$ such that, if
	$Q$ is a prime with $Q \equiv -1 \pmod {N\p^j}$ such that
	\begin{equation}\label{eqn:the_condition13}
		a(Q n) + Q^{\kappa-1} a\kro n{Q}
		\equiv 0 \pmod {\p^j}
	\end{equation}
	for every $1\leq n \leq n_0$ such that $\p \nmid n$ and $\kro n\p \neq \eps$,
	then $Q$ is an interesting prime.
\end{theorem}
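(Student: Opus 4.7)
The plan is to adapt the proof of Theorem~\ref{thm:main} to the integral weight setting, replacing the half-integral weight Hecke operator $\Tqs$ by the classical Hecke operator $T_Q$ acting on $S_\kappa(\Gamma_0(N\p^2),\chi)$, where $\chi$ is the nebentypus of $f$.

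First, I would reuse the auxiliary eta-quotient $F_\p$ from the proof of Theorem~\ref{thm:main}: it satisfies ${F_\p}^{\p^j-1} \equiv 1 \pmod{\p^j}$ and its orders of vanishing at cusps $s=a/c$ of $\Gamma_0(N\p^2)$ with $\p^2\nmid c$ are given by \eqref{eqn:vanishing_Fp}. Following \cite[Proposition~3.4]{treneer1}, I would then form
\[
f_\p = f \otimes \chi_\p^{triv} - \eps\, f \otimes \kro{\cdot}{\p} \in M_k^{wh}(\Gamma_0(N\p^2),\chi),
\]
which vanishes at every cusp $s=a/c$ with $\p^2 \mid c$. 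Using Lemma~\ref{lem:vanishing_twist} together with \eqref{eqn:ligozat} applied to the eta-quotient $f$ I can compute a lower bound $v_0$ for $\ord_s(f_\p)$ at the remaining cusps and then select the smallest $\beta \geq j-1$ such that $\p^\beta \ord_s(F_\p) > -v_0$ holds uniformly in $s$.

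Second, setting $\kappa = k + \p^\beta k_\p$ (with $\beta$ large enough that $\kappa$ is positive), the product
\[
g_{\p,j} = \tfrac12\, f_\p \cdot F_\p^{\p^\beta} \in S_\kappa(\Gamma_0(N\p^2),\chi)
\]
is a cusp form of integer weight, and \eqref{eqn:congruence_Fp} yields the congruence
\[
g_{\p,j} \equiv \sum_{\p\nmid n,\,\kro n\p \neq \eps} a(n)\, q^n \pmod{\p^j}.
\]
Given a prime $Q \equiv -1 \pmod{N\p^j}$, I would then set $h_{\p,j} = g_{\p,j}\mid T_Q \in S_\kappa(\Gamma_0(N\p^2),\chi)$. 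Writing $h_{\p,j} = \sum c(n) q^n$, the classical formula for $T_Q$ gives
\[
c(n) = a_{g_{\p,j}}(Qn) + \chi(Q)\, Q^{\kappa-1}\, a_{g_{\p,j}}(n/Q),
\]
so that, by the congruence above, $c(n)$ is automatically divisible by $\p^j$ unless $\p\nmid n$ and $\kro n\p \neq \eps$; in that remaining range $c(n)$ reduces modulo $\p^j$ to the left-hand side of \eqref{eqn:the_condition13}. As in \cite[Theorem~1.3]{treneer1}, $h_{\p,j} \equiv 0 \pmod{\p^j}$ is enough to conclude that $Q$ is interesting, and the integral weight analogue of Proposition~\ref{prop:sturm}, proved by the same induction on $j$ after passing to a power of $h_{\p,j}$ that trivializes the character, supplies the effective bound $n_0$ turning this into a finite check.

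The main obstacle I anticipate is bookkeeping with the nebentypus: since $Q \equiv -1 \pmod N$ we have $\chi(Q) = \chi(-1) \in \{\pm 1\}$, yet \eqref{eqn:the_condition13} carries no explicit character. Reconciling this requires either absorbing the sign $\chi(-1)$ into the conventions of the statement (which in particular holds whenever the eta-quotient character satisfies $\chi(-1) = 1$, as in the examples of the final section) or reinterpreting \eqref{eqn:the_condition13} with $\chi(Q)$ inserted. The remaining ingredients — the cusp-by-cusp order of vanishing estimates for $f_\p$, the choice of $\beta$, and the Sturm-style reduction — are direct translations of those already used in the proof of Theorem~\ref{thm:main}.
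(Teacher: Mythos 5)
Your proposal is correct and follows essentially the same route as the paper, which simply replaces $N$ by $\lcm(4,N)$, views $f$ as a form of weight $2k/2$ so that the construction of $f_\p$, $g_{\p,j}$ and $h_{\p,j}$ from the proof of Theorem~\ref{thm:main} can be reused verbatim, and then invokes Proposition~\ref{prop:sturm}. The nebentypus issue you flag is harmless: since $\chi(Q)=\chi(-1)=(-1)^{\mathrm{wt}}$ and $Q\equiv-1\pmod{\p^j}$, the quantity $\chi(Q)Q^{\mathrm{wt}-1}$ agrees with $Q^{\kappa-1}$ modulo $\p^j$ for the (even) $\kappa$ produced by the construction.
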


\begin{proof}
	We replace $N$ by $\lcm(4,N)$ and consider $f$ as a form of level $N$ and
	weight $2k/2$ in order to be able to use \cite[Theorem 3.1]{treneer1}.
	We let $g_{\p,j}$ and $h_{\p,j}$ be as
	in the previous proof, so that if $h_{\p,j} \equiv 0 \pmod{\p^j}$ then $Q$ is
	an interesting prime.
	Write $h_{\p,j} = \sum_{n=1}^\infty c(n) q^n$. Then if $\p \mid n$ or $\kro n\p
	=\eps$ we have that $c(n) \equiv 0 \pmod{\p^j}$, and otherwise $c(n)$
	equals the left hand side of \eqref{eqn:the_condition13} modulo $\p^j$.
	The	result follows then from Proposition~\ref{prop:sturm}.
\end{proof}

\begin{rmk}
	In the proof of Theorems \ref{thm:thm12} and \ref{thm:thm13}
	the existence of interesting primes among the candidates 
	(i.e., those $Q$ such that $Q\equiv -1 \pmod {N\p^j}$) is given as
	a consequence of Chebotarev's density theorem. 
	Thus we expect, asymptotically, to have them evenly distributed.
	However, in the examples we computed in the next section we obtained that
	most of the candidates that are computationally
	tractable are actually interesting.
\end{rmk}

\section{Algorithms and examples}\label{sec:implementation}

We present two algorithms in this section.
First, we describe how to tell if a given eta-quotient satisfies Condition C at
a given prime $\p$ and, second, following the proof of Theorem \ref{thm:main},
we describe an algorithm for finding interesting
primes given an eta-quotient $f$ and a prime $\p$ at which $f$ satisfies Condition C
(and the corresponding value of $\eps$).

These algorithms were implemented in \texttt{Sage} (\cite{sagemath}). Our code
is available at \cite{code}, together with some examples and related data.

\begin{algorithm}\label{alg:conditionC}
	\LinesNumbered
	\KwData{An eta-quotient $f$ of level $N$; an odd prime $\p$; $\eps \in
	\{\pm1 ,0\}$}
	\KwResult{True if Condition C is satisfied at $\p$ with $\eps$, and
	False otherwise}
    \BlankLine
	\For{every cusp $s$ in $\Gamma_0(N)$}
		{
			$n_0 \leftarrow \ord_s(f)$, using \eqref{eqn:ligozat}\;
			\For{$n \leftarrow -1$ \KwTo $n_0$}
			{
				$c \leftarrow a_{\xi_s}(n)$, computed following Section
				\ref{sec:computing}\;
				\If{$c \neq 0$}
				{
					\If{$\kro{4n+r_s}{\p} \neq \eps \kro{h_s}{\p}$}
						{\Return False}
				}
			}
		}
	\Return True
    \BlankLine

	\caption{Algorithm for deciding if Condition C is satisfied.}
\end{algorithm}

\begin{algorithm}\label{alg:interesting}
	\LinesNumbered
 \KwData{An eta-quotient $f=\sum a(n)q^n$ of level $N$ and half-integral weight
	 $k/2$; an odd prime $\p$ at which condition C is satisfied and the
	 corresponding $\eps$;
	 an integer $j\geq 1$; a candidate $Q$}
	 \KwResult{True if $Q$ is interesting, and False if
	 \eqref{eqn:the_condition} does not hold}
 \BlankLine
$v_0 \leftarrow \min\{\ord_t(f): t \text{ a cusp with respect to }
	\Gamma_0(N)\}$, using \eqref{eqn:ligozat};

 \For{every cusp $s = a/c$ in $\Gamma_0(N\p^2)$ with $c \mid N\p^2$}{
  \If {$\p^2 \nmid c$}{

       $\ord_s(F_\p) \leftarrow$ the result of \eqref{eqn:vanishing_Fp}\;
       $\log_\p \leftarrow \left \lceil \log_\p\left(\frac{-v_0}
	   {\ord_s(F_\p)}\right)\right \rceil$\;

       $\beta \leftarrow \max\{\beta,\log_\p\}$\;

}

}

$\kappa\leftarrow k + \p^\beta k_\p$, where $k_\p$ is given by \eqref{eqn:kp}\;

$n_0 \leftarrow$ Sturm bound for $\hps$, as in Proposition \ref{prop:sturm}\;

interesting $\leftarrow$ True\;
\While{interesting}{
	\For{$n\leftarrow 1$ \KwTo $n_0$}{\If{$\p \nmid n$ and $\kro n\p \neq \eps$}{
\If{$a(Q^2 n) +	\kro{(-1)^{\tfrac{\kappa-1}{2}}n}{Q} Q^{\tfrac{\kappa-3}{2}} a(n) + Q^{\kappa-2} a\kro n{Q^2}\not\equiv 0 \pmod {\p^j}$\label{step:cong}}
{interesting$\leftarrow$ False\;}
}
}
}
\Return interesting\;
\BlankLine

\caption{Algorithm for finding interesting primes.}
\end{algorithm}

\begin{rmk}
	Algorithm \ref{alg:interesting} works in the integral weight case,
	replacing Step \ref{step:cong} according to \eqref{eqn:the_condition13}.
\end{rmk}

\begin{rmk}\label{rmk:fewer_coeffs}

	In general, Step \ref{step:cong} of Algorithm \ref{alg:interesting} requires
	the computation of $a(n)$ up to the product of $Q^2$ and the Sturm bound.  
	For many forms of large level and large $Q$ this can be prohibitive. 
	Strictly speaking, though, the number of coefficients that we need to know
	is roughly equal to the Sturm bound, since
	Step \ref{step:cong} requires computing $a(Q^2n), a(n)$ and $a(n/Q^2)$ for
	$n$ such that $\p \nmid n$ and $\kro n\p \neq \eps$, up to the Sturm bound; the
	$a(n/Q^2)$'s are included in the $a(n)$'s, and the Kronecker symbol
	condition cuts the required number of coefficients in half.
	So, if there is some way to compute these coefficients other than
	multiplying out the eta-quotient \eqref{eq:etaq}, we can use Algorithm
	\ref{alg:interesting} to compute with forms of moderately large Sturm bound
	and for moderately large candidates.
\end{rmk}

\begin{rmk}
	Given a candidate $Q$, if the output of Algorithm \ref{alg:interesting} is
	False, then there exists an $n$ such that \eqref{eqn:the_condition} (or
	\eqref{eqn:the_condition13}, in the integral weight case) does not hold.
	A priori, this does not imply that the prime $Q$ is not interesting.
	Nevertheless in the following examples, at least when it was computationally
	feasible, for each $Q$ for which the Algorithm \ref{alg:interesting}
	returned False, we verified that one of the congruences Theorem
	\ref{thm:thm12} (or Theorem \ref{thm:thm13}, in the integral weight case)
	does not hold.
\end{rmk}

\begin{rmk} To the best of our knowledge, the examples computed below are new explicit congruences for these partition functions, except where otherwise noted.
\end{rmk}

\subsection{Examples of half-integral weight}

\subsubsection*{The partition function}

Recall that an eta-quotient of particular interest is related to the
partition function:
\[
	\sum_{n \equiv -1 \pmod{24}} 
	p \left(\frac{n+1}{24}\right) q^n =
	\frac{1}{\eta(24z)}
	\quad \in M_{-1/2}^{wh}(\Gamma_0(576),\chi).
\]

The first prime at which it satisfies condition $C$ is $\p=5$, with $\vareps_5 =
1$.
Using the notation from Section~\ref{sec:results}, we can take $\beta=0$.  
Then since $\mu_{576\cdot 5^2} = 34560$, the Sturm bound is
\[
	\left\lfloor
		\frac{23}{24} \cdot 34560 -
		\frac{34560-1}{8 \cdot 576\cdot 5^2}
	\right\rfloor
	= 33119.
\]
The first $Q$ such that $Q\equiv -1\pmod{576\cdot 5}$ is $Q=2879$. 
In order to run Algorithm \ref{alg:interesting}, \textit{a priori}, we need to
compute $p\left(\frac{n+1}{24}\right)\pmod{5}$ for $1\leq n \leq 33119\cdot
2879^2 \approx 10^{12}$.
That is beyond the scope of our computational abilities.

Following Remark \ref{rmk:fewer_coeffs}, to check if $Q=2879$ is interesting,
since $Q^2 > 33119$
we only need to calculate
\begin{multline*}
	p\left(\frac{n+1}{24}\right) \quad \text{ and } \quad
	p\left(\frac{Q^2n+1}{24}\right) \pmod5, \\
	\kro n5 = -1, \quad 1 \leq n \leq 33119, 
\end{multline*}
Moreover, we only need to consider $n\equiv -1\pmod{24}$.
In conclusion, we need to compute
\begin{multline*}
	p\left(\frac{n+1}{24}\right) \quad\text{ and }\quad
	p\left(\frac{Q^2n+1}{24}\right) \pmod5, \\
	n\equiv 23, 47 \pmod{120},\quad 1\leq n \leq 33119.
\end{multline*}
These individual coefficients can be computed independently (and in parallel)
using fast algorithms for computing partition numbers.

Similar computations for other candidates $Q$ and other
primes $\p$ at which Condition C is satisfied can also be done.
We summarize the results of our computations in Proposition~\ref{prop:eta-congruences}. 
We computed the needed coefficients using the \texttt{FLINT} library
(\cite{flint}).
The code is available at \cite{code}.

\begin{prop}\label{prop:eta-congruences} 

	We have that
	\[
		p\left(\frac{Q^3n+1}{24}\right)\equiv 0 \pmod{\p},
	\]
	for all $n$ coprime to $\p Q$ such that
	$n \equiv 1 \pmod{24}$ and $\kro{-n}\p \neq \eps$, for

	\begin{itemize}

		\item $\p = 5, \varepsilon_5 = 1$ and
			\[
				Q = 2879, 11519, 23039, 25919, 51839, 66239, 69119, 71999, 86399,
			97919.
			\]
		\item $\p = 7, \varepsilon_7 = -1$ and
			\[
				Q = 16127, 44351, 48383, 68543, 76607.
			\]
		\item $\p = 11, \varepsilon_{11} = -1$ and
			\[
				Q=25343.
			\]
		\item $\p = 13, \varepsilon_{13} = 1$ and
			\[
			Q = 7487, 44927, 67391.
			\]
		\item $\p = 17, \varepsilon_{17} = 1$ and
			\[
			Q = 9791.
			\]
	\end{itemize}
	In each case, these $Q$ are all the interesting primes less than $10^5$.

\end{prop}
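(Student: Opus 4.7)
The plan is to verify the proposition by a direct application of Algorithm \ref{alg:interesting} (equivalently, Theorem \ref{thm:main}) to the eta-quotient $f(z) = 1/\eta(24z) \in M_{-1/2}^{wh}(\Gamma_0(576),\chi)$, for each of the relevant primes $\p \in \{5,7,11,13,17\}$, and each candidate prime $Q \equiv -1 \pmod{576\p}$ with $Q < 10^5$.

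The first step is to verify Condition C via Algorithm \ref{alg:conditionC}. For each cusp $s = a/c$ of $\Gamma_0(576)$ one computes $\ord_s(f)$ using \eqref{eqn:ligozat}, determines $r_s, h_s$, and inspects the (finitely many) possibly-nonzero coefficients $a_{\xi_s}(n)$ for $n < 0$ with $\p \nmid (4n + r_s)$ using the product expansion \eqref{eq:etadelta}. Showing that in each case a consistent sign $\eps_\p$ emerges, and recording the values listed in the statement, is routine but tedious; this is done once per prime $\p$.

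Next, for each $\p$, one computes the parameters needed to invoke Theorem \ref{thm:main}. Namely, one calculates $v_0 = \min_t \ord_t(f)$ from \eqref{eqn:ligozat}, chooses $\beta$ minimally so that $\p^\beta \ord_s(F_\p) > -v_0$ at every cusp $s$ of $\Gamma_0(576 \p^2)$ with $\p^2 \nmid c$ (for $\p \geq 5$ and $v_0$ small, $\beta = 0$ suffices), sets $\kappa = k + \p^\beta k_\p$ with $k = -1$, and evaluates the Sturm bound $n_0$ from \eqref{eqn:sturm} with $M = 576\p^2$. For each candidate $Q$ and each $n \leq n_0$ with $\p \nmid n$ and $\kro n\p \neq \eps_\p$, the congruence \eqref{eqn:the_condition} must be checked modulo $\p$.

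The main obstacle is purely computational. For the largest $Q$ close to $10^5$, the bound $Q^2 n_0$ is of size roughly $10^{12}$, well beyond what direct multiplication of the eta-expansion can reach. The plan, following Remark \ref{rmk:fewer_coeffs}, is to bypass this by computing individual values of $p((Q^2 n + 1)/24) \pmod \p$ using the Hardy--Ramanujan--Rademacher formula as implemented in \texttt{FLINT}; since only $n \equiv -1 \pmod{24}$ contributes, and the Kronecker symbol condition cuts the required residues in half, the set of $n \in [1, n_0]$ for which a value must be computed is further restricted to two residue classes modulo $120$. Each such value is computed independently and in parallel. The candidates that pass the resulting check are exactly the primes listed; the remaining candidates below $10^5$ are excluded by exhibiting an explicit $n$ for which the corresponding coefficient of $h_{\p,j}$ is nonzero modulo $\p$, thereby obstructing interestingness via the reverse direction of Theorem \ref{thm:main}.
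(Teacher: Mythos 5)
Your treatment of the positive half of the proposition --- verifying Condition C with Algorithm~\ref{alg:conditionC}, computing $v_0$ and taking $\beta=0$, forming $\kappa$ and the Sturm bound $n_0$ with $M=576\p^2$, and then checking \eqref{eqn:the_condition} for each candidate $Q\equiv -1\pmod{576\p}$ by computing the individual values $p\left(\frac{Q^2n+1}{24}\right)\bmod\p$ with the Hardy--Ramanujan--Rademacher formula in \texttt{FLINT}, restricted to $n\equiv 23,47\pmod{120}$ --- is exactly the paper's argument, including the use of Remark~\ref{rmk:fewer_coeffs} to avoid expanding the eta-quotient out to $Q^2 n_0\approx 10^{12}$ terms.

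There is, however, a genuine gap in how you dispose of the remaining candidates. You assert that they are excluded ``by exhibiting an explicit $n$ for which the corresponding coefficient of $h_{\p,j}$ is nonzero modulo $\p$, thereby obstructing interestingness via the reverse direction of Theorem~\ref{thm:main}.'' Theorem~\ref{thm:main} has no reverse direction: the vanishing of $h_{\p,j}$ modulo $\p^j$ (equivalently, of \eqref{eqn:the_condition} up to the Sturm bound) is a \emph{sufficient} condition for $Q$ to be interesting, not a necessary one, because ``interesting'' is defined by the congruences of Theorem~\ref{thm:thm12} themselves and not by the vanishing of the auxiliary cusp form $h_{\p,j}$. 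A nonzero coefficient $c(n)$ of $h_{\p,j}$ therefore does not by itself rule $Q$ out. To justify the final sentence of the proposition (that the listed $Q$ are \emph{all} the interesting primes below $10^5$) you must, for each rejected candidate, exhibit an explicit $n$ coprime to $\p Q$ with $n\equiv 1\pmod{24}$ and $\kro{-n}{\p}\neq\eps$ for which $p\left(\frac{Q^3n+1}{24}\right)\not\equiv 0\pmod{\p}$. This is precisely the additional verification the paper records in the remark following Algorithm~\ref{alg:interesting}, and it is a separate computation from running the algorithm.
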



\begin{rmk}
	The congruences for $\ell\in\{5, 7,13\}$ were known by Atkin.
	Moreover, \cite[Theorem 2]{atkin} implies that in these cases every
	candidate is interesting.    This agrees with our computations as summarized in Proposition~\ref{prop:eta-congruences}.

In the same article, Atkin also suggests that the same should hold for $\ell \in
\{11,17,19,23\}$; namely, that every candidate is indeed interesting.  
We found counterexamples for each of these $\p$'s.
More precisely, our computations showed that the primes $Q = 12671, 44351, 76031$
are candidates for $\p=11$, but they are not interesting.  
The same holds for the primes $\p = 17$ and $Q = 19583, 68543,
97919$, for $\p = 19$ and $Q = 32831$, for $\p = 23$ and $Q = 66239$, and for
$\p = 29$ and $Q = 16703, 50111$.
\end{rmk}

For each pair of primes $\ell$ and $Q$ above, fixing a suitable residue class for $n$ modulo $24\ell$ will yield a Ramanujan-type congruence. For example, Proposition~\ref{prop:eta-congruences} implies that
\begin{multline*}
	p\left(15956222111407 \, m + 9425121394238 \right)\equiv 0\pmod{17}\, \\
	\mbox{ for }\,m\not\equiv 4007\pmod{9791}.
\end{multline*}

\subsubsection*{Overpartition function}

	Let $\overline p(n)$ denote the overpartition function.
	For information on $\overline p(n)$ see \cite{overpartitions}.  
	It is known that
	\[
		\sum_{n\geq 0} \overline{p}(n)q^n =
		\frac{\eta(2z)}{\eta^2(z)}
		\quad \in M_{-1/2}^{wh}(\Gamma_0(16),\chi).
	\]
	Since we do not have an algorithm for computing the numbers $\overline p(n)$
	individually, in this case we must compute the whole Fourier expansion up to
	the Sturm bound, thus obtaining smaller interesting primes.

\begin{prop}

	We have that
	\[
		\overline{p}\left(Q^3n\right)\equiv 0 \pmod{\p},
	\]
	for all $n$ coprime to $\p Q$ such that $\kro{-n}{\p} \neq \eps$, for

	\begin{itemize}

		\item $\p = 3, \varepsilon_3 = -1$ and
			\[
				Q = 47, 191, 239, 383, 431, 479, 719, 863, 911, 1103.
			\]
		\item $\p = 5, \varepsilon_5 = 1$ and
			\[
				Q = 79, 239, 479.
			\]
	\end{itemize}

\end{prop}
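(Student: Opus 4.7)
The plan is to apply Algorithm~\ref{alg:interesting} with $j=1$ to $f(z) = \eta(2z)/\eta^2(z) \in M_{-1/2}^{wh}(\Gamma_0(16),\chi)$ for each $\p \in \{3,5\}$, exhausting all candidate primes $Q \equiv -1 \pmod{16\p}$ up to the chosen computational cutoff. Before invoking that algorithm one first runs Algorithm~\ref{alg:conditionC}: for every cusp $s$ of $\Gamma_0(16)$, compute $\ord_s(f)$ via \eqref{eqn:ligozat}, expand $f \sk \xi_s$ using the method of Section~\ref{sec:computing}, and check that every index $n < 0$ with $a_{\xi_s}(n) \neq 0$ and $\p \nmid (4n+r_s)$ satisfies the Kronecker-symbol identity in Definition~\ref{def:conditionC}. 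This verification should produce the sign data $\varepsilon_3 = -1$ and $\varepsilon_5 = 1$ asserted in the statement.

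Next, for each $\p$ I follow the construction in the proof of Theorem~\ref{thm:main} to set up the auxiliary form $g_{\p,1} \in S_{\kappa/2}(\Gamma_0(16\p^2),\chi)$. Compute $v_0 = \min_t \ord_t(f)$ over cusps of $\Gamma_0(16)$ by \eqref{eqn:ligozat}; use \eqref{eqn:vanishing_Fp} to evaluate $\ord_s(F_\p)$ at every cusp $s=a/c$ of $\Gamma_0(16\p^2)$ with $\p^2 \nmid c$; then pick the smallest $\beta \geq 0$ making $\p^\beta \ord_s(F_\p) > -v_0$ at each such cusp. This fixes $\kappa = -1 + \p^\beta k_\p$ with $k_\p$ given by \eqref{eqn:kp}, and hence the Sturm bound $n_0$ from Proposition~\ref{prop:sturm} applied with $M = 16\p^2$.

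For each candidate $Q$ in the exhaustion range, multiply out the product \eqref{eq:etaq} via \eqref{eq:pentagonal} to obtain the coefficients $\overline p(m) \bmod \p$ for all $m \leq Q^2 n_0$, and then test the congruence \eqref{eqn:the_condition} for every $1 \leq n \leq n_0$ with $\p \nmid n$ and $\kro{n}{\p} \neq \varepsilon_\p$. The $Q$ that pass the test are precisely the interesting primes guaranteed by Theorem~\ref{thm:main}, yielding the claimed congruence $\overline p(Q^3 n) \equiv 0 \pmod{\p}$ for all $n$ coprime to $\p Q$ with $\kro{-n}{\p} \neq \varepsilon_\p$. Tabulating the successful $Q \equiv -1 \pmod{16\p}$ up to the computational ceiling produces the two lists in the statement.

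The main obstacle is purely computational, as already flagged in the paragraph preceding the proposition: unlike for $p(n)$, we have no fast algorithm for individual values of $\overline p(n)$, so Remark~\ref{rmk:fewer_coeffs} is unavailable and every candidate $Q$ forces a full expansion of $f$ up to index $Q^2 n_0$. Since $n_0$ itself already scales with $\mu_{16\p^2}$, this severely limits the size of $Q$ that can be certified, which is why the lists here stop at orders of magnitude below the $10^5$ cutoff used for the partition function in Proposition~\ref{prop:eta-congruences}.
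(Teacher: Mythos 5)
Your proposal is correct and follows essentially the same route as the paper: verify Condition C for $\eta(2z)/\eta^2(z)$ at $\p=3,5$ via Algorithm~\ref{alg:conditionC}, then certify each candidate $Q\equiv -1\pmod{16\p}$ by checking \eqref{eqn:the_condition} up to the Sturm bound as in Theorem~\ref{thm:main}/Algorithm~\ref{alg:interesting}, computing the full $q$-expansion since no fast individual-coefficient algorithm for $\overline{p}(n)$ is available. The paper gives no further argument beyond this computation, so there is nothing to add.
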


In other words for, $\p = 3$ Algorithm \ref{alg:interesting}
returns true for every candidate $Q < 1151$; the same holds for $\p = 5$ and $Q
< 719$.
On the other hand, for $\p = 7$, it returns False for $Q = 223$.

\subsection{Examples of integral weight}

\subsubsection*{$24$-color partitions}  

Let $p_{24}(n)$ denote the number of 24-color partitions of $n$.  
See \cite{keith} for background on $k$-color partitions.
Let $\Delta\in S_{12}(\SL_2(\ZZ))$ be the normalized cuspform of weight 12 and
level 1. Then
\[
	\sum_{n\geq -1} p_{24}(n+1) q^n =
	\frac{1}{\Delta(z)} =
	\frac{1}{\eta(z)^{24}}
	\quad \in M^{wh}_{-12}\left(\SL_2(\ZZ)\right).
\]

\begin{prop}
	We have that
	\[
	p_{24}\left(Q(n+1)\right) \equiv 0\pmod{\p}
	\]
	for all $n$ coprime to $\p Q$ such that $\kro{-n}{\p} \neq \eps$, for
	every $\p < 12$, with
	\[
		\varepsilon_3 = -1,
		\varepsilon_5 = 1,
		\varepsilon_7 = -1,
		\varepsilon_{11} = -1,
	\]
	and every $Q$ such that $Q \equiv -1 \pmod{\p}$ and $Q < 10^5$.

	The same holds for $\p = 13,\, \varepsilon_{13} = 1$ and
	\[
		Q = 1741, 2963, 4523, 5407, 5563, 5927, 5953, 6733, 7331, 9749.
	\]
\end{prop}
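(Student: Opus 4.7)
The statement is a verification that the (integral-weight) hypotheses of Theorem~\ref{thm:thm13} apply to $f = 1/\eta(z)^{24}$ at each of the primes $\ell\in\{3,5,7,11,13\}$, together with a finite check of condition \eqref{eqn:the_condition13} (via Algorithm~\ref{alg:interesting}) for every candidate $Q$ in the stated range. I will spell out the plan in that order.

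First, I would record the modular data. Writing $X=\{(1,-24)\}$ so that $f = \eta^X$ is an eta-quotient of weight $-12$ (hence $k=-24$ in the $k/2$-convention of Definition~\ref{def:etaquotient}), we have $\sum_X r_\delta\delta = -24 \equiv 0 \pmod{24}$, the product $s=\prod_X \delta^{r_\delta}=1$, and the character is trivial. The level is $N=1$; to apply Theorem~\ref{thm:thm13} I would replace $N$ by $\lcm(4,N)=4$ and regard $f\in M_{-12}^{wh}(\Gamma_0(4))$ as explained at the beginning of the proof of that theorem.

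Second, I would verify Condition~C. The form $f$ has a single cusp class to worry about for the $\infty$-expansion, where $h_\infty=1$, $r_\infty=0$, and the only negative-index nonzero coefficient is $a(-1)=p_{24}(0)=1$. So Condition~C reduces to demanding $\kro{-1}{\ell}=\eps$, which forces $\eps_3=\eps_7=\eps_{11}=-1$ and $\eps_5=\eps_{13}=+1$, exactly the values in the statement. For the other cusps of $\Gamma_0(4)$ (namely $0$ and $1/2$) I would use \eqref{eqn:ligozat} to compute $\ord_s(f)$ and then expand via \eqref{eq:etadelta} to produce the finitely many negative Fourier coefficients, each of which must also respect Condition~C (a short verification).

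Third, I would run Algorithm~\ref{alg:interesting} in its integral-weight form. Using \eqref{eqn:ligozat} I compute $v_0=\min_s \ord_s(f)$ over cusps of $\Gamma_0(4)$, and from \eqref{eqn:vanishing_Fp} I determine the smallest $\beta\geq 0$ with $\ell^\beta \ord_s(F_\ell) > -v_0$ for every cusp $s$ of $\Gamma_0(4\ell^2)$ with $\ell^2\nmid c$. Setting $\kappa = k + \ell^\beta k_\ell$ and applying Proposition~\ref{prop:sturm} with $M=4\ell^2$ produces the Sturm bound $n_0$. For each prime $Q\equiv -1\pmod{\ell}$ in the stated range, I then test the congruence
\[
a(Qn) + Q^{\kappa-1}\, a\!\kro{n}{Q} \equiv 0 \pmod{\ell}
\]
from \eqref{eqn:the_condition13} for all $1\leq n\leq n_0$ with $\ell\nmid n$ and $\kro{n}{\ell}\neq\eps$, where $a(m)=p_{24}(m+1)$.

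The main obstacle, and the only nontrivial aspect, is the scale of the computation: one must produce the Fourier coefficients $a(Qn)$ for $n$ up to the Sturm bound, which for $Q$ near $10^5$ and the $\kappa$ resulting above requires evaluating $p_{24}$ at arguments of size roughly $Q\cdot n_0$. I would handle this by expanding $\eta(z)^{-24}$ modulo $\ell$ using \eqref{eq:pentagonal} to compute $\prod_n(1-q^n)^{-24}\pmod{\ell}$, exploiting that the relevant residues can be obtained from a single series expansion truncated at $Q\cdot n_0$ (and parallelized across $Q$). Once these coefficients are tabulated, verifying \eqref{eqn:the_condition13} is routine. The outcome of that finite computation—implemented as in \cite{code}—is precisely the list of interesting $Q$ in the statement: for $\ell<12$ every candidate $Q<10^5$ passes, while for $\ell=13$ only the ten listed primes do.
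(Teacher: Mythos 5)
Your proposal is correct and follows essentially the same route as the paper: the proposition is a computational consequence of the integral-weight analogue of Theorem~\ref{thm:main} applied to $\eta^{-24}$ via Algorithm~\ref{alg:interesting}, after checking Condition~C (which here reduces to $\kro{-1}{\p}=\eps$ since $a(-1)=1$ is the only negative-index coefficient), and your determination of the $\eps_\p$ values and of the finite verification of \eqref{eqn:the_condition13} up to the Sturm bound matches what the paper does.
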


We remark that the remaining candidates $Q < 10^4$ for $\p =13$
(i.e. $Q = 103, 181, 233,\dots$) are not interesting.

\subsubsection*{$3$-core partitions}

Let $B_3(n)$ denote the number of triples of $3$-core partitions of $n$.
In \cite{Wang} it was shown that
\[
	\sum_{n\geq 1} B_3(n-1)q^n = 
	\frac{\eta(3z)^9}{\eta(z)^3}
	\quad \in M_3(\Gamma_0(3)).
\]
Since this is a holomorphic eta-quotient, Condition C holds trivially at every
odd prime.
In this case all the candidates we computed systematically turned out to be
interesting. 
We were not able to find a candidate for which Algorithm \ref{alg:interesting}
returns False.

\begin{prop}
	
	We have that
	\[
	B_3\left(Q(n-1)\right)\equiv 0 \pmod{\p}
	\]
	for every $n$ coprime to $\p Q$,
	for every $\p<14$ and for every $Q\equiv -1\pmod{3\p}$ less than $10^4$.

\end{prop}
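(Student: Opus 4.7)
The plan is to apply the integral-weight analog of Algorithm~\ref{alg:interesting} to the eta-quotient $f(z) = \eta(3z)^9/\eta(z)^3 \in M_3(\Gamma_0(3))$, whose Fourier expansion at $\infty$ is $\sum_{n \geq 1} B_3(n-1) q^n$. Because $f$ is holomorphic, its expansion at every cusp has no principal part, so Condition~C holds trivially at every odd prime $\p$, with $\eps = 0$. In the notation of the proof of Theorem~\ref{thm:main}, the invariant $v_0$ is non-negative, so for $j=1$ the inequality $\p^\beta \ord_s(F_\p) > -v_0$ is automatic and we may take $\beta = 0$.

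With this choice of $\beta$, for each $\p \in \{3,5,7,11,13\}$ the weight $\kappa$ and the Sturm bound $n_0$ coming from Proposition~\ref{prop:sturm} applied at level $M = 3\p^2$ are explicitly computable. Since $\eps = 0$, the Kronecker-symbol restriction in \eqref{eqn:the_condition13} is vacuous, so to certify that a candidate prime $Q$ is interesting it suffices to verify
\[
a(Qn) + Q^{\kappa-1}\, a(n/Q) \equiv 0 \pmod{\p}
\]
for every $1 \leq n \leq n_0$ with $\p \nmid n$, where $a(m) = B_3(m-1)$ for $m \geq 1$ and $a(n/Q) = 0$ when $Q \nmid n$. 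By the integral-weight analog of Theorem~\ref{thm:main}, if every $n$ in the range passes the test, then $Q$ is interesting and the claimed congruence for $B_3$ follows.

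For each of the five primes $\p < 14$ and each candidate $Q \equiv -1 \pmod{3\p}$ with $Q < 10^4$ we run this test. The required Fourier coefficients of $f$ are produced by expanding $\eta(3z)^9$ and $\eta(z)^{-3}$ via the Pentagonal Number Theorem~\eqref{eq:pentagonal} and taking their Cauchy product modulo $\p$.

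The main obstacle is purely computational: the indices involved reach up to $Q \cdot n_0$, which for the larger $\p$'s is in the hundreds of millions of terms. Nevertheless the computation is tractable since all arithmetic is reduced modulo $\p$ and the pentagonal expansion is highly sparse, and the \texttt{Sage} implementation at \cite{code} confirms that every candidate $Q$ in the stated range returns True. This proves the proposition.
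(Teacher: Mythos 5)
Your proposal follows essentially the same route as the paper: the paper likewise certifies each candidate $Q$ by running the integral-weight version of Algorithm~\ref{alg:interesting}, using that Condition C holds trivially for the holomorphic form $\eta(3z)^9/\eta(z)^3$ (so $\eps=0$ and the Kronecker restriction in \eqref{eqn:the_condition13} is vacuous) and that $\beta=0$ suffices because $v_0\geq 0$. The one detail to adjust is that the paper's integral-weight argument first replaces $N$ by $\lcm(4,N)=12$, so the Sturm bound should be taken at level $M=12\p^2$ rather than $3\p^2$; otherwise the verification range $1\leq n\leq n_0$ is too short to invoke Proposition~\ref{prop:sturm}.
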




\end{document}